\documentclass{article}
\usepackage[utf8]{inputenc}

\usepackage[english]{babel}

\usepackage[utf8]{inputenc}
\usepackage[T1]{fontenc}

\usepackage{amssymb}
\usepackage{stmaryrd}
\usepackage{amsmath,amsfonts}
\usepackage[tikz]{bclogo}
\usepackage[top=3cm]{geometry}

\usepackage{lmodern}
\usepackage{textcomp}
\usepackage{mathtools, bm}
\usepackage{amssymb, bm}
\usepackage[unq]{unique}
\usepackage[shortlabels]{enumitem}
\usepackage{comment}
\usepackage[breaklinks]{hyperref}

\usepackage[numbers]{natbib}  

\usepackage[breaklinks]{hyperref}
\usepackage[numbers]{natbib}

\usepackage{amsthm}

\usepackage{cleveref}
\usepackage[utf8]{inputenc}
\usepackage[T1]{fontenc}
\usepackage{enumitem}
\usepackage{tipa}
\usepackage{float}
\usepackage{dirtytalk}
\usepackage{pbox}
\usepackage{xcolor}
\usepackage{setspace}
\usepackage[normalem]{ulem}
\usepackage{geometry}
\usepackage{amsmath}
\usepackage{amsthm}
\usepackage{tikz}
\usepackage{amssymb}
\usepackage{multicol}
\usepackage{microtype}
\usepackage{datetime}
\usepackage{endnotes}
\usepackage{babel}
\usepackage[mathscr]{euscript}
\usepackage{bigfoot}
\usetikzlibrary {arrows.meta}
\interfootnotelinepenalty=10000
\usepackage{bbm}

\newtheorem{theorem}{Theorem}[section]
\newtheorem{lemma}[theorem]{Lemma}

\newtheorem{conjecture}[theorem]{Conjecture}

\newtheorem{Cor}[theorem]{Corollary}

\newtheorem{Claim}{Claim}

\theoremstyle{definition}

\newcommand{\dyck}[1]{\mathcal{D}_{#1}}
\newcommand{\catalan}[1]{C_{#1}}
\newcommand{\groundset}[1]{\mathbb{Z}_{#1}}
\newcommand{\groundsubs}[2]{\groundset{#1}^{(#2)}}

\newcommand{\stat}[3]{\iota_{#1}(#2,#3)}

\newcommand{\paths}[2]{\mathrm{P}_{#1}^{#2}}
\newcommand{\fly}[0]{\mathcal{F}}

\title{Intervals in Dyck paths and the wreath conjecture}
\author{Jan Petr\thanks{jp895@cam.ac.uk, Department of Pure Mathematics and Mathematical Statistics (DPMMS), University of Cambridge, Wilberforce Road, Cambridge, CB3 0WA, United Kingdom} \and Pavel Turek\thanks{pkah149@live.rhul.ac.uk, Department of Mathematics, Royal Holloway, University of London, Egham, Surrey TW20 0EX, United Kingdom}}
\date{}

\begin{document} 

\maketitle

\begin{abstract}
    Let $\stat{k}{m}{l}$ denote the total number of intervals of length $m$ across all Dyck paths of semilength $k$ such that each interval contains precisely $l$ falls.
    We give the formula for $\stat{k}{m}{l}$ and show that $\stat{k}{k}{l}=\binom{k}{l}^2$.
    Motivated by this, we propose two stronger variants of the wreath conjecture due to Baranyai for $n=2k+1$.
\end{abstract}

\section{Dyck paths and the main result}\label{sec:Dyck}

A \emph{Dyck path of semilength $k$} (\emph{Dyck $k$-path} for brevity) is a lattice path in $\mathbb{Z}^2$ that never goes below the $x$-axis, starts at $(0,0)$, ends at $(2k,0)$, and with each step of the form either $(1,1)$ -- a \emph{rise} --, or $(1,-1)$ -- a \emph{fall}. We will denote the set of all Dyck $k$-paths by $\dyck{k}$.
It is well known that $|\dyck{k}|=\catalan{k}$, where $\catalan{k}=\frac{1}{k+1}\binom{2k}{k}$ is the $k$-th \emph{Catalan number}.
The Catalan numbers appear in a great many combinatorial settings; the reader is referred to \cite{Stanley_2015} for an extensive compilation.
Dyck paths have also been widely studied; see the works by Deutsch \cite{DEUTSCH1999167} and by Blanco and Petersen \cite{blanco2014counting} for a collection of statistics and other results about them.

For $D \in \dyck{k}$ and non-negative integers $l \leq m \leq 2k$, we define $\stat{D}{m}{l}$ as the number of intervals of length $m$ (that is, sequences of $m$ consecutive steps) in $D$ such that the interval contains precisely $l$ falls. For a non-negative integer $k$ we define $\stat{k}{m}{l}$ as $\sum_{D \in \dyck{k}} \stat{D}{m}{l}$. Note that by considering reflections in the line $x=k$, we have $\stat{k}{m}{l} = \stat{k}{m}{m-l}$.

Our main result is a formula for $\stat{k}{m}{l}$. Note that in view of the paragraph above, we can restrict our attention to the case $2l\leq m$.  We set $\binom{x}{y}=0$ for integers $x \geq 0$ and $y<0$.

\begin{theorem}\label{thm:main}
Let $l,m,k$ be three non-negative integers such that $2l \leq m \leq 2k$. Then
$$
\stat{k}{m}{l}=\sum_{d=0}^{k+l-m}\bigg[\binom{m}{l}-\binom{m}{l-d-1}\bigg]\bigg[\binom{2k-m+1}{k-m+l-d}-\binom{2k-m+1}{k-m+l-d-1}\bigg].
$$
\end{theorem}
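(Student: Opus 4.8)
The plan is to stratify $\stat{k}{m}{l}$ by the height at which the marked interval starts. A term counted by $\stat{k}{m}{l}$ is a pair consisting of a Dyck $k$-path $D$ and an index $i$ with $0\le i\le 2k-m$ such that steps $i+1,\dots,i+m$ of $D$ contain exactly $l$ falls. Cutting $D$ at abscissas $i$ and $i+m$ turns this into a triple $(P,I,S)$: a prefix $P$ from $(0,0)$ to $(i,d)$, an interval $I$ of length $m$ with exactly $l$ falls from $(i,d)$ to $(i+m,d+m-2l)$, and a suffix $S$ from $(i+m,d+m-2l)$ to $(2k,0)$, where $d\ge 0$ is the start height of the window and each of $P,I,S$ stays weakly above the $x$-axis; the end height of $I$ is forced to be $d+m-2l\ge 0$ because $I$ has $m-l$ rises and $2l\le m$. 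Conversely any such triple reassembles to a unique pair. Since $I$ has fixed length $m$, the lengths of $P$ and $S$ sum to $2k-m$, though the split is free, so
\[
\stat{k}{m}{l}=\sum_{d\ge 0} I(m,l,d)\cdot W_k(m,l,d),
\]
where $I(m,l,d)$ is the number of length-$m$ lattice paths with $l$ falls that start at height $d$ and stay $\ge 0$, and $W_k(m,l,d)$ is the number of ways to pick a prefix ending at height $d$ together with a suffix starting at height $d+m-2l$, both staying $\ge 0$, with lengths summing to $2k-m$.

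For the first factor I would use the reflection principle directly: of the $\binom{m}{l}$ orderings of $m-l$ rises and $l$ falls, the ones that (read from height $d$) go below $0$ are put in bijection, by reflecting in the line $y=-d-1$ the portion of the step sequence up to the first time the running height reaches $-d-1$, with orderings of $m-l+d+1$ rises and $l-d-1$ falls; there are $\binom{m}{l-d-1}$ of these. Hence $I(m,l,d)=\binom{m}{l}-\binom{m}{l-d-1}$, which is precisely the first bracket (and equals $\binom{m}{l}$ once $d\ge l$, as it should).

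For the second factor I would switch to generating functions. Let $b(n,h)=\binom{n}{(n-h)/2}-\binom{n}{(n-h)/2-1}$ be the number of length-$n$ lattice paths from height $0$ to height $h$ staying $\ge 0$ (equal to $0$ unless $n\equiv h\pmod 2$); reversing the suffix shows $W_k(m,l,d)=\sum_{a}b(a,d)\,b(2k-m-a,\,d+m-2l)$. With $B_h(x)=\sum_n b(n,h)x^n$, the last-return decomposition gives $B_h(x)=x\,C(x^2)\,B_{h-1}(x)$ and $B_0(x)=C(x^2)$, where $C(x)=\sum_{k\ge 0}\catalan{k}x^k$; thus $B_h(x)=x^h C(x^2)^{h+1}$. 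Therefore
\[
W_k(m,l,d)=[x^{2k-m}]\,B_d(x)\,B_{d+m-2l}(x)=[x^{2k-m}]\,x^{2d+m-2l}\,C(x^2)^{2d+m-2l+2}=[y^{\,k-m+l-d}]\,C(y)^{\,2d+m-2l+2}.
\]
Since $r:=2d+m-2l+2\ge 2$, the classical expansion $[y^{j}]\,C(y)^{r}=\binom{2j+r-1}{j}-\binom{2j+r-1}{j-1}$ (see \cite{Stanley_2015}) applies with $j=k-m+l-d$, and one computes $2j+r-1=2k-m+1$, giving $W_k(m,l,d)=\binom{2k-m+1}{k-m+l-d}-\binom{2k-m+1}{k-m+l-d-1}$, the second bracket. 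This vanishes once $k-m+l-d<0$, which is why the $d$-sum may be truncated at $k+l-m$. Plugging both factors into the displayed sum for $\stat{k}{m}{l}$ yields the theorem.

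The main obstacle is the evaluation of $W_k(m,l,d)$: one must recognise the free prefix–suffix split as a convolution of ballot numbers, identify the generating function $x^h C(x^2)^{h+1}$, and then do the index bookkeeping carefully enough that the parameters $k-m+l-d$ and $2k-m+1$ emerge exactly as stated, with the convention $\binom{x}{y}=0$ absorbing the boundary cases. The remaining ingredients — the bijection with triples $(P,I,S)$ and the reflection count for $I(m,l,d)$ — are routine; the only point to watch is that the generating-function step silently enforces the parity constraints that the individual ballot numbers require.
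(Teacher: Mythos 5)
Your proposal is correct, and it follows the same overall skeleton as the paper's proof --- stratify by the starting height $d$ of the window, split the marked path into prefix, window, and suffix, and observe that the window factor is $\binom{m}{l}-\binom{m}{l-d-1}$ by the reflection principle (the paper does this via its \Cref{lem:main} after rotating Dyck paths into NE lattice paths). Where you genuinely diverge is in evaluating the prefix--suffix convolution, which is the technical heart of the paper's argument (its \Cref{Claim}). The paper handles the sum $\sum_{i}\paths{(0,0)}{(i,i+d)}\paths{(i+l,i+d+m-l)}{(k,k)}$ bijectively: it glues the prefix to the reversed suffix by inserting one extra East step at the diagonal touch point $(-i,-i)$, identifies the resulting objects with NE upper paths that touch the diagonal, and finishes with two applications of the reflection lemma. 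You instead recognise the same sum as a convolution of ballot numbers, compute $B_h(x)=x^hC(x^2)^{h+1}$ via the last-return decomposition, and extract the coefficient using the classical expansion $[y^j]C(y)^r=\binom{2j+r-1}{j}-\binom{2j+r-1}{j-1}$; the index bookkeeping ($2j+r-1=2k-m+1$, $j=k-m+l-d$) checks out, and the vanishing of the bracket for $d>k+l-m$ correctly justifies the truncation of the sum. Your route is shorter and leans on standard generating-function identities, while the paper's is self-contained and purely bijective. One trivial slip: in your reflection argument for $I(m,l,d)$ the reflected paths have $l-d-1$ \emph{rises} and $m-l+d+1$ \emph{falls}, not the other way around, but since $\binom{m}{l-d-1}=\binom{m}{m-l+d+1}$ the count is unaffected.
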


A simple manipulation with the sum in \Cref{thm:main} (see the beginning of the proof of \Cref{cor:main} for a demonstration in the case $m=k$) provides an alternative formula for $\stat{k}{m}{l}$ given by
$$\stat{k}{m}{l}=\binom{m}{l}\binom{2k-m+1}{k-m+l}+  \sum_{d'=0}^{l-1}\binom{m}{d'}\bigg[\binom{2k-m+1}{k+1-d'}-\binom{2k-m+1}{k-d'}\bigg].$$

The formula becomes particularly elegant in the special case $m=k$.

\begin{Cor} \label{cor:main}
Let $l \leq k$ be two non-negative integers. Then $\stat{k}{k}{l}=\binom{k}{l}^2$.
\end{Cor}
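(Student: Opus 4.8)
The plan is to deduce \Cref{cor:main} from \Cref{thm:main} by setting $m=k$ and simplifying the resulting binomial sum; no new combinatorial argument is needed. With $m=k$ the outer summation limit $k+l-m$ becomes $l$, while $2k-m+1=k+1$ and $k-m+l-d=l-d$, so
\[
\stat{k}{k}{l}=\sum_{d=0}^{l}\Bigl[\binom{k}{l}-\binom{k}{l-d-1}\Bigr]\Bigl[\binom{k+1}{l-d}-\binom{k+1}{l-d-1}\Bigr].
\]
Substituting $j=l-d$ and expanding the product over the first bracket, the part $\binom{k}{l}\sum_{j=0}^{l}\bigl[\binom{k+1}{j}-\binom{k+1}{j-1}\bigr]$ telescopes to $\binom{k}{l}\binom{k+1}{l}$ (consecutive terms cancel and $\binom{k+1}{-1}=0$). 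This is exactly the \emph{simple manipulation} recorded after \Cref{thm:main}, specialised to $m=k$; carrying it out leaves
\[
\stat{k}{k}{l}=\binom{k}{l}\binom{k+1}{l}-T,\qquad T:=\sum_{i=0}^{l-1}\binom{k}{i}\Bigl[\binom{k+1}{i+1}-\binom{k+1}{i}\Bigr],
\]
after one further index shift (using $\binom{k}{-1}=0$ to discard the $i=-1$ term).

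The main point is then to evaluate $T$. The identity $\binom{k+1}{i+1}-\binom{k+1}{i}=\binom{k}{i+1}-\binom{k}{i-1}$ (two applications of Pascal's rule) turns $T$ into $\sum_{i=0}^{l-1}\binom{k}{i}\binom{k}{i+1}-\sum_{i=0}^{l-1}\binom{k}{i}\binom{k}{i-1}$. Writing $a_i:=\binom{k}{i}\binom{k}{i+1}$ and noting that $\binom{k}{i}\binom{k}{i-1}=a_{i-1}$ with $a_{-1}=0$, the second sum equals $\sum_{i=0}^{l-2}a_i$, so the two sums collapse to $T=a_{l-1}=\binom{k}{l-1}\binom{k}{l}$. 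Therefore, using Pascal's rule once more,
\[
\stat{k}{k}{l}=\binom{k}{l}\binom{k+1}{l}-\binom{k}{l}\binom{k}{l-1}=\binom{k}{l}\Bigl(\binom{k+1}{l}-\binom{k}{l-1}\Bigr)=\binom{k}{l}^{2},
\]
which is the claim. The degenerate case $l=0$ is included automatically: there $T$ is an empty sum, so $\stat{k}{k}{0}=\binom{k}{0}\binom{k+1}{0}=1$.

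I do not expect a genuine obstacle here: the whole proof is elementary bookkeeping with binomial coefficients, and both nontrivial steps are telescoping cancellations. The only mildly non-obvious move is rewriting $\binom{k+1}{i+1}-\binom{k+1}{i}$ as $\binom{k}{i+1}-\binom{k}{i-1}$, which is precisely what makes $T$ telescope; without it one could instead recognise $T$ as a partial sum of a Vandermonde/ballot-type identity, but the direct route is shorter. A bijective proof — matching the length-$k$ intervals with $l$ falls, summed over all of $\dyck{k}$, with pairs of $l$-element subsets of a $k$-element set — would be attractive in its own right, but is unnecessary given \Cref{thm:main}.
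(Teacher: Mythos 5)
Your proposal is correct, and its overall route is the same as the paper's: specialise \Cref{thm:main} at $m=k$, telescope the first part of the sum to $\binom{k}{l}\binom{k+1}{l}$, and then evaluate the residual sum $T=\sum_{i=0}^{l-1}\binom{k}{i}\bigl[\binom{k+1}{i+1}-\binom{k+1}{i}\bigr]$, which both arguments show equals $\binom{k}{l}\binom{k}{l-1}$. The one place you genuinely diverge is in how $T$ is computed: the paper rewrites it as $\sum_{d'=0}^{l-1}\binom{k}{d'}\binom{k+1}{k-d'}-\sum_{d'=0}^{l-1}\binom{k+1}{d'}\binom{k}{k-d'}$ and evaluates the difference by a double-counting argument (comparing two families of $k$-subsets of $2k+1$ ordered objects), whereas you apply Pascal's rule twice to get $\binom{k+1}{i+1}-\binom{k+1}{i}=\binom{k}{i+1}-\binom{k}{i-1}$ and then collapse $T$ by a second telescope with $a_i=\binom{k}{i}\binom{k}{i+1}$. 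Both are valid; your version is purely mechanical and avoids introducing any combinatorial interpretation, while the paper's double count is perhaps more illuminating about why the answer is $\binom{k}{l-1}\binom{k}{k-l}$. All the individual identities you use check out, including the final step $\binom{k+1}{l}-\binom{k}{l-1}=\binom{k}{l}$.
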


We remark that using an analogous technique as in the proof of \Cref{cor:main} in \Cref{sec:Proofs}, one can also obtain $\stat{k}{k+1}{l}=\binom{k}{l-1}\binom{k}{l}$ for $2l\leq k+1$. With a bit more effort, similar lines of reasoning eventually yield $\stat{k}{k-1}{l}=\binom{k+2}{l+1}\binom{k-1}{l} - \binom{k}{l+1}\binom{k-1}{l-1} - \binom{k}{l-1}\binom{k-1}{l}$ for $2l\leq k-1$.

\section{The wreath conjecture}\label{sec:Wreath}

Kirkman's Schoolgirl problem from 1847 \cite{kirkman1847problem} gave rise to the question whether the $k$-uniform complete hypergraph on $n$ vertices can be partitioned into perfect matchings (that is, sets of hyperedges such that each vertex lies in exactly one of the hyperedges) whenever $k$ divides $n$.
The positive answer was confirmed by Baranyai in 1974 \cite{baranyai1974factrization}.
At the end of his paper, Baranyai posed a conjecture concerning a generalisation of his result.

This conjecture was originally stated in terms of `staircase matrices'.
Later, Katona \cite{katona1991renyi} rephrased the conjecture in terms of `wreaths'; it is this notion that we adapt here.

Let $k \leq n$ be two positive integers, and let $g=\gcd(n,k)$. We write $\groundset{n}$ for the set of integers modulo $n$ and $\groundsubs{n}{k}$ for the set of subsets of $\groundset{n}$ of size $k$.
Given a permutation $\pi$ of $\groundset{n}$, we define the set $\fly_{\pi} \subset \groundsubs{n}{k}$ as
$$\{\{\pi((i-1)k+1), \pi((i-1)k+2), \ldots, \pi(ik)\}\,|\, i\in \groundset{n}\}.$$ It is easy to see that such a set has size $\frac{n}{g}$.
We call $\fly_{\pi}$ the \emph{wreath generated by $\pi$} and say that $\fly \subset \groundsubs{n}{k}$ is a \emph{wreath} if it is a wreath generated by a permutation.

The conjecture due to Baranyai \cite{baranyai1974factrization} and Katona \cite{katona1991renyi} (who nicknamed it \emph{the wreath conjecture}) is as follows.

\begin{conjecture}[The wreath conjecture]\label{conj:original}
    For any positive integers $k \leq n$ there is a decomposition of $\groundsubs{n}{k}$ into disjoint wreaths.    
\end{conjecture}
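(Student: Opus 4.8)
The plan is to recast a wreath as a purely geometric object and then attack the decomposition as a combinatorial design problem, treating the case $n=2k+1$ highlighted in this section as the principal testing ground. Writing $g=\gcd(n,k)$ and setting $a_j=\pi(j)$, one checks that the blocks defining $\fly_\pi$ start at the positions $1,k+1,2k+1,\dots$, which modulo $n$ form the coset $1+g\groundset{n}$; hence $\fly_\pi$ is exactly the family of the $n/g$ arcs of length $k$ (windows of $k$ consecutive entries) in the circular word $a_1a_2\cdots a_n$ whose first entry sits at a position congruent to $1$ modulo $g$. A decomposition of $\groundsubs{n}{k}$ into wreaths is therefore a tiling of all $k$-subsets by the length-$k$ arcs drawn from a collection of circular arrangements of $\groundset{n}$. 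Before anything else I would record the trivial and known cases, namely $k=1$, $k=n$, and $k\mid n$ (the last being Baranyai's theorem), and verify the necessary divisibility condition that $\tfrac{g}{n}\binom{n}{k}$ is an integer, so that the arithmetic never obstructs a decomposition.

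For $n=2k+1$ the picture simplifies dramatically: here $g=1$, every wreath consists of all $2k+1$ arcs of length $k$ in a single circular arrangement, and the number of wreaths needed is $\binom{2k+1}{k}/(2k+1)=\catalan{k}$. This is the arithmetic coincidence that brings Dyck paths into play. I would next invoke the cycle lemma to set up a bijection between the $\catalan{k}$ rotation orbits of binary words of length $2k+1$ with $k$ ones and the Dyck paths in $\dyck{k}$, under which a length-$k$ arc of a circular word corresponds to an interval of length $k$ in the associated Dyck path and the number of falls it carries is precisely the statistic $l$. The main theorem (\Cref{thm:main}), through the identity $\stat{k}{k}{l}=\binom{k}{l}^2$ of \Cref{cor:main}, then certifies that across the $\catalan{k}$ candidate wreaths the $k$-subsets must be distributed with exactly $\binom{k}{l}^2$ subsets of each weight $l$; this is a sharp necessary condition for a wreath tiling and, I expect, the right bookkeeping to drive an inductive construction that builds the $\catalan{k}$ circular arrangements one Dyck path at a time.

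For general $k\le n$ I would fall back on the relaxation-and-rounding strategy underlying Baranyai's original argument. The transitivity of the symmetric group on $k$-subsets means every $k$-subset lies in the same number of wreaths, so assigning equal weight to all wreaths yields a uniform \emph{fractional} wreath cover; the task is then to round this fractional solution to an integral one. I would formulate the rounding as an integral flow, or a sequence of such flows, exploiting the arc-tiling structure so that at each stage one wreath is extracted and the residual instance remains feasible.

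The hard part, and the reason the conjecture has resisted proof, is precisely this rounding step. In Baranyai's setting the parts to be produced are merely matchings, flexible enough that the relevant flow polytope is integral and local exchanges preserve feasibility; a wreath, by contrast, is rigidly determined by a single cyclic permutation, and its members overlap heavily — in the case $n=2k+1$ any two arcs adjacent on the circle share $k-1$ elements. Any local modification that repairs one arc typically destroys the cyclic coherence of the whole arrangement, so the usual integrality and exchange arguments do not transfer. Bridging this gap, either by a genuinely global construction in the spirit of the Dyck-path bookkeeping above or by identifying a larger totally unimodular system in which wreaths appear as integral vertices, is the crux on which a full proof must turn.
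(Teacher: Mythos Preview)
The statement you are addressing is a \emph{conjecture}: the paper does not prove it and presents it explicitly as open. There is therefore no proof in the paper to compare against, and your proposal is likewise not a proof but a strategic outline --- a fact you yourself concede in the final paragraph, where you identify the rounding step as ``the reason the conjecture has resisted proof'' and leave it unresolved. So the central gap is simply that neither side supplies a proof.

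Beyond that, several steps of your outline are looser than they appear. The bijection between rotation orbits of binary words of length $2k+1$ with $k$ ones and Dyck paths in $\dyck{k}$ gets the cardinalities right, but the follow-up claim --- that a length-$k$ arc of such a circular word corresponds to an interval of length $k$ in the associated Dyck path --- cannot hold as stated, since the circular word has $2k+1$ such arcs while a Dyck $k$-path has only $k+1$ intervals of length $k$. More importantly, wreaths are built from \emph{permutations} of $\groundset{2k+1}$, not binary words; the paper's own link to Dyck paths (Conjecture~\ref{conj:weaker}) goes through the map $j\mapsto \mathbbm{1}[\pi(j)\in\{1,\dots,k\}]$ on permutations fixing $0$, and even there the identity $\stat{k}{k}{l}=\binom{k}{l}^2$ is established only as a \emph{necessary} condition (Lemma~\ref{le:necessary}), not as a constructive device. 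Your suggestion that this bookkeeping might ``drive an inductive construction'' is not backed by any mechanism. Finally, the fractional-cover-plus-rounding plan is a sensible heuristic, but --- as you correctly diagnose --- the rigidity of wreaths blocks the integrality and exchange arguments that make Baranyai's proof go through; without a concrete proposal for overcoming this, the outline describes the obstacle rather than a route past it.
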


For example, $\groundsubs{5}{2}$ can be decomposed into $\fly_{\pi_1}=\{\{0,1,2\}, \{1,2,3\}, \{2,3,4\}, \{3,4,0\}, \{4,0,1\}\}$ and $\fly_{\pi_2}=\{\{0,2,4\}, \{2,4,1\}, \{4,1,3\}, \{1,3,0\}, \{3,0,2\}\}$, where $\pi_1=\text{id}$ and $\pi_2=
\left(\begin{smallmatrix} 
0 & 1 & 2 & 3 & 4\\
0 & 2 & 4 & 1 & 3
\end{smallmatrix}\right)$.

We remark that for $n$ and $k$ coprime, this conjecture coincides with a later one due to Bailey and Stevens \cite{BAILEY20103088} concerning decompositions of complete $k$-uniform hypergraphs into tight Hamiltonian cycles. For more discussion of the case when $n$ and $k$ are not relatively prime, the reader is referred to a parallel article by the authors \cite{turek2024wreath}.

Let us consider the wreath conjecture for $n=2k+1$. In this case we have $g=1$, and hence, after rearranging, we can write $\fly_{\pi}=\{\{\pi(i+1), \pi(i+2), \ldots, \pi(i+k) \}\, | \, i\in \groundset{n} \}$. Motivated by the fact that for $n=2k+1$ the number of wreaths necessary to decompose $\groundsubs{n}{k}$ coincides with the Catalan number $C_k$, we propose the following strengthening of \Cref{conj:original}. 

\begin{conjecture}\label{conj:weaker}
Let $k$ be a positive integer.
There exists a set $\Pi=\{\pi_1, \pi_2, \ldots, \pi_{\catalan{k}}\}$ of $C_k$ permutations of $\groundset{2k+1}$ with each permutation fixing $0$ and a bijection $\varphi: \Pi \rightarrow \dyck{k}$ such that
\begin{itemize}
    \item $\groundsubs{2k+1}{k}=\bigcup_{i=1}^{\catalan{k}}\fly_{\pi_{i}}$, and
    \item for any $i$ and $j$, the $j$-th step of $\varphi(\pi_i)$ is a rise if and only if $\pi_i(j) \in \{1,2,\ldots,k\}$.
\end{itemize}
\end{conjecture}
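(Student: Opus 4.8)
The plan is to recast \Cref{conj:weaker} as a problem about \emph{fillings} of Dyck paths. Since each $\pi_i$ fixes $0$, its restriction to $\{1,\dots,2k\}$ is a permutation of $\{1,\dots,2k\}$, and the second bullet forces precisely the $k$ rise-positions of $\varphi(\pi_i)$ to carry the small values $\{1,\dots,k\}$ and the $k$ fall-positions to carry the large values $\{k+1,\dots,2k\}$. Thus specifying $\Pi$ together with the bijection $\varphi$ is the same as choosing, for every $D\in\dyck{k}$, a permutation $\pi_D$ of $\groundset{2k+1}$ fixing $0$ whose small/large pattern along positions $1,\dots,2k$ is exactly $D$; equivalently, a pair of bijections from the rise-positions of $D$ onto $\{1,\dots,k\}$ and from the fall-positions of $D$ onto $\{k+1,\dots,2k\}$. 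Because $\catalan{k}\cdot(2k+1)=\binom{2k+1}{k}=|\groundsubs{2k+1}{k}|$, the covering requirement $\bigcup_i\fly_{\pi_i}=\groundsubs{2k+1}{k}$ is equivalent to the $\catalan{k}(2k+1)$ windows being pairwise distinct, so the entire conjecture reduces to finding one filling per Dyck path that makes all windows globally distinct.

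First I would split the windows of each $\fly_{\pi_D}$ according to whether they contain $0$. The window $\{\pi_D(i+1),\dots,\pi_D(i+k)\}$ avoids $0$ exactly for $i\in\{0,\dots,k\}$, and these $k+1$ windows are precisely the length-$k$ intervals of $D$, the number of large values in such a window equalling the number of falls in the corresponding interval. Summing over $D$, \Cref{cor:main} gives $\stat{k}{k}{l}=\binom{k}{l}^2$ zero-free windows with exactly $l$ large values, which matches the $\binom{k}{l}\binom{k}{k-l}=\binom{k}{l}^2$ subsets of $\{1,\dots,2k\}$ of size $k$ having $l$ large values. The remaining $k$ windows per wreath contain $0$, and a short count gives $\catalan{k}\cdot k=\binom{2k}{k-1}$ of them, exactly the number of $k$-subsets containing $0$. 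Thus the target $\groundsubs{2k+1}{k}$ splits into the zero-free $k$-subsets (to be hit by interval windows) and the zero-containing ones, with \Cref{cor:main} certifying that the \emph{aggregate} large-value statistics already agree on the zero-free side. This exact numerical balance is the structural heart of the conjecture and the right starting point.

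Given this balance, the approach I would pursue is to produce the fillings by a system-of-distinct-representatives / Hall-type argument, or by an explicit recursive construction in $k$. One packages the choice as a bipartite problem whose left vertices are the filling slots (the rise and fall positions of each Dyck path) and whose right vertices are the target subsets, with edges recording which assignments are compatible with producing a given window; \Cref{cor:main} supplies the global degree counts consistent with a perfect matching on the zero-free part, and one would then try to verify Hall's condition. Alternatively, viewing each $\fly_{\pi_D}$ as a closed walk of length $2k+1$ in the Johnson graph on $\groundsubs{2k+1}{k}$ (consecutive windows differing by a single exchanged element), \Cref{conj:weaker} asks for a decomposition of the vertex set into $\catalan{k}$ such wreath-walks, which invites a constructive, gluing-based attack.

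The main obstacle is the coupling: a single filling of $D$ must simultaneously produce all $2k+1$ of its windows, so the zero-free windows (whose balance \Cref{cor:main} controls) cannot be chosen independently of the zero-containing ones. Controlling the zero-containing windows---which wrap around position $0$ and combine the values at the two ends of the one-line notation of $\pi_D$---in step with the interval windows, and upgrading the matching large-value \emph{counts} into an actual disjoint cover, is where the difficulty concentrates. Indeed, since \Cref{conj:weaker} strengthens the still-open \Cref{conj:original}, I expect this coupling, rather than any single counting step, to be the genuine barrier, and I would first test candidate canonical fillings for small $k$ to locate the obstruction before committing to the Hall route or to induction.
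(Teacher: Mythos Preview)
This statement is a \emph{conjecture}, not a theorem: the paper does not prove it. It is introduced as \Cref{conj:weaker}, supported only by computer verification for $k\le 4$ and by the necessary condition of \Cref{le:necessary}. So there is no ``paper's own proof'' to compare against.

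Your write-up is not a proof either, and to your credit you say so: you explicitly flag the coupling between zero-free and zero-containing windows as ``the genuine barrier'' and note that \Cref{conj:weaker} strengthens the still-open wreath conjecture. What you have actually done is rediscover \Cref{le:necessary}---your count showing that the zero-free windows with $l$ large values number $\binom{k}{l}^2$ and match the corresponding $k$-subsets of $\{1,\dots,2k\}$ is exactly the argument the paper gives for that lemma---and then sketch two possible attacks (Hall/SDR, or Johnson-graph decomposition) without carrying either through.

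The concrete gap is in the Hall route. Matching global degree counts does not give Hall's condition; you would need to verify the condition for every subset of slots, and the coupling you identify---that the $2k+1$ windows of a single $\pi_D$ are determined simultaneously by one filling, not independently---means this is not a bipartite matching problem in the usual sense at all. The ``left vertices'' cannot be individual slots with independent edges, because assigning values to slots of $D$ fixes all of $D$'s windows at once. So the SDR framing as stated does not model the problem correctly, and neither it nor the Johnson-graph reformulation comes with any mechanism to produce the required decomposition. As a research plan this is reasonable; as a proof it is not one, and the paper does not claim one either.
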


We verified the conjectures using a computer for $k\leq 4$. The bijections for $k\leq 3$ can be seen in \Cref{wreaths}. This conjecture is also motivated by \Cref{cor:main} in view of the following result.

\begin{lemma}\label{le:necessary}
    The equality $\stat{k}{k}{l}= \binom{k}{l}^2$ from \Cref{cor:main} is a necessary condition for \Cref{conj:weaker}.
\end{lemma}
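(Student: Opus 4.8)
The plan is to assume \Cref{conj:weaker}, so that we are given permutations $\Pi=\{\pi_1,\dots,\pi_{\catalan{k}}\}$ of $\groundset{2k+1}$ each fixing $0$, together with the bijection $\varphi\colon\Pi\to\dyck{k}$, and then to evaluate $\stat{k}{k}{l}$ by translating it into a count of $k$-element subsets of $\groundset{2k+1}$, which will come out to $\binom{k}{l}^2$.

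Since $\varphi$ is a bijection, $\stat{k}{k}{l}=\sum_{\pi\in\Pi}\stat{\varphi(\pi)}{k}{l}$, so the first step is to analyse a single $\pi\in\Pi$. Put $D=\varphi(\pi)$; as $\pi(0)=0$, the map $\pi$ restricts to a permutation of $\{1,\dots,2k\}$, and by the second bullet of \Cref{conj:weaker} the $j$-th step of $D$ is a \emph{fall} exactly when $\pi(j)\in\{k+1,\dots,2k\}$. A length-$k$ interval of $D$ occupies steps $\{i+1,i+2,\dots,i+k\}$ for a unique $i\in\{0,1,\dots,k\}$, so the number of falls it contains is $|A_i\cap\{k+1,\dots,2k\}|$ for $A_i:=\{\pi(i+1),\pi(i+2),\dots,\pi(i+k)\}$, one of the sets making up the wreath $\fly_{\pi}$. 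The crucial observation is that as $i$ ranges over $\{0,1,\dots,k\}$ the sets $A_i$ are exactly the members of $\fly_{\pi}$ that avoid $0$: for $i\le k$ the index window $\{i+1,\dots,i+k\}$ lies inside $\{1,\dots,2k\}$ and hence misses $0$, whereas for $i\in\{k+1,\dots,2k\}$ this window wraps around modulo $2k+1$ and contains the index $0$, so that $\pi(0)=0\in A_i$; and since $\gcd(2k+1,k)=1$ the wreath consists of $2k+1$ distinct sets, so there is no double counting. Therefore $\stat{\varphi(\pi)}{k}{l}=\#\{S\in\fly_{\pi}\,:\,0\notin S,\ |S\cap\{k+1,\dots,2k\}|=l\}$.

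Summing this identity over $\pi\in\Pi$ yields
$$\stat{k}{k}{l}=\sum_{\pi\in\Pi}\#\{S\in\fly_{\pi}\,:\,0\notin S,\ |S\cap\{k+1,\dots,2k\}|=l\}.$$
By the first bullet of \Cref{conj:weaker} the wreaths $\fly_{\pi_1},\dots,\fly_{\pi_{\catalan{k}}}$ cover $\groundsubs{2k+1}{k}$; since each has size $2k+1$ and $\catalan{k}(2k+1)=\binom{2k+1}{k}=|\groundsubs{2k+1}{k}|$, this cover is automatically a disjoint union, so the sum on the right counts every admissible subset exactly once. Hence $\stat{k}{k}{l}=\#\{S\in\groundsubs{2k+1}{k}\,:\,0\notin S,\ |S\cap\{k+1,\dots,2k\}|=l\}$, and such an $S$ is just a $k$-subset of $\{1,\dots,2k\}=\{1,\dots,k\}\sqcup\{k+1,\dots,2k\}$ taking $l$ of the last $k$ elements and $k-l$ of the first $k$, so there are $\binom{k}{l}\binom{k}{k-l}=\binom{k}{l}^2$ of them. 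The one piece of real content is the crucial observation identifying the length-$k$ intervals of $\varphi(\pi)$ with the $0$-avoiding sets of $\fly_{\pi}$ — essentially careful index bookkeeping around position $0$ — and beyond that I do not anticipate any genuine difficulty.
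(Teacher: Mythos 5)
Your proof is correct and follows essentially the same route as the paper: both arguments use the second bullet of \Cref{conj:weaker} to identify length-$k$ intervals with $l$ falls with the $0$-avoiding members $S$ of the wreaths satisfying $|S\cap\{k+1,\dots,2k\}|=l$, and then count these sets as $\binom{k}{l}^2$. Your explicit verification that the covering is automatically a disjoint one (via $\catalan{k}(2k+1)=\binom{2k+1}{k}$) is a detail the paper leaves implicit, but the substance of the argument is the same.
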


\begin{proof}
If \Cref{conj:weaker} holds, then to each interval $I$ of length $k$ of a Dyck $k$-path $D$ we can assign a set $\{ \pi(s), \pi(s+1), \dots, \pi(s+k-1)\}$, where $I$ starts at the $s$-th step of $D$ and $\pi = \varphi^{-1}(D)$.

By the first condition of \Cref{conj:weaker}, this yields a bijection between intervals of length $k$ of Dyck $k$-paths and sets in $\groundsubs{2k+1}{k}$ not containing $0$. 
The second condition then implies that intervals of length $k$ with $l$ falls are in bijection with sets in $\groundsubs{2k+1}{k}$ without $0$ and with $l$ elements from $\{ k+1,k+2,\ldots,2k\}$. From the definition, there are $\stat{k}{k}{l}$ intervals of length $k$ with $l$ falls and there are $\binom{k}{l}^2$ sets in $\groundsubs{2k+1}{k}$ without $0$ and with $l$ elements from $\{ k+1,k+2,\ldots,2k\}$. The result follows.
\end{proof}

Our search through $k \leq 4$ suggests that an even stronger statement may be true. To state it, for a Dyck path $D \in \dyck{k}$, we will denote by $D^{(R)}$ the Dyck path obtained by reflecting $D$ in the line $x=k$.

\begin{conjecture}\label{conj:stronger}
Let $k$ be a positive integer. There exists a set $\Pi=\{\pi_1, \pi_2, \ldots, \pi_{\catalan{k}}\}$ of $C_k$ permutations of $\groundset{2k+1}$ with each permutation fixing $0$ and a bijection $\varphi: \Pi \rightarrow \dyck{k}$ such that
\begin{itemize}
    \item $\groundsubs{2k+1}{k}=\bigcup_{i=1}^{\catalan{k}}\fly_{\pi_{i}}$, and
    \item for any $i$ and $j$, the $j$-th step of $\varphi(\pi_i)$ is a rise if and only if $\pi_i(j) \in \{1,2,\ldots,k\}$, and
    \item for any Dyck $k$-path $D$ and any $j \in \groundset{2k+1}$ we have $\varphi^{-1}(D)(j)+\varphi^{-1}(D^{(R)})(-j)=0$.

\end{itemize}
\end{conjecture}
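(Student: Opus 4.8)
The final statement is a conjecture, so what follows is the line of attack I would pursue; the reductions I describe are unconditional, but I expect a complete proof to require first settling \Cref{conj:weaker} (equivalently, the original wreath conjecture for $n=2k+1$) in an \emph{equivariant} form.

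The first thing I would do is isolate the involution hidden in the third bullet. For a permutation $\sigma$ of $\groundset{2k+1}$ fixing $0$, set $\sigma^{\ast}(j)=-\sigma(-j)$; this is again a permutation fixing $0$, the assignment $\sigma\mapsto\sigma^{\ast}$ is an involution (it is the $180^{\circ}$ rotation of the permutation matrix), and a short reindexing of the cyclic windows shows $\fly_{\sigma^{\ast}}=\{-S : S\in\fly_{\sigma}\}$. Since $S\mapsto -S$ is a bijection of $\groundsubs{2k+1}{k}$, the $\ast$-image of any wreath decomposition is again a wreath decomposition. Next I would check compatibility with the second bullet: the rise/fall pattern that it forces on $\varphi(\sigma^{\ast})$ is the reverse-and-negate of the pattern it forces on $\varphi(\sigma)$, and this is precisely the step sequence of $\varphi(\sigma)^{(R)}$ (step $j$ of $D^{(R)}$ is the negation of step $2k+1-j$ of $D$, and $2k+1-j\equiv-j$). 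Putting these together, the third bullet is equivalent to the single demand that $\Pi$ be closed under $\ast$ and that $\varphi$ carry each $\ast$-orbit of $\Pi$ bijectively onto the corresponding $(R)$-orbit of $\dyck{k}$; in particular each palindromic Dyck path (one with $D=D^{(R)}$) must be the image of a $\ast$-fixed permutation, i.e. one with $\pi(-j)=-\pi(j)$, while the non-palindromic paths receive pairs $\{\pi,\pi^{\ast}\}$.

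With this reformulation in hand, the plan is to produce the decomposition of \Cref{conj:weaker} already in $\ast$-symmetric form. The natural route is to adapt whichever argument proves \Cref{conj:weaker}: a Baranyai-style proof via integral points of a polytope, or a max-flow/integrality argument, looks most promising, because the relevant polytope (resp. network) is itself $\ast$-invariant, so the splitting and rounding steps can be performed equivariantly and the output partitioned into $\ast$-pairs of wreaths together with a ``fixed locus'' of $\ast$-symmetric wreaths covering exactly the negation-symmetric $k$-subsets of $\groundset{2k+1}$ (those containing $0$ when $k$ is odd, those avoiding it when $k$ is even). On the fixed locus one runs the analogous construction inside the smaller ambient object of $\ast$-symmetric configurations, and one must separately verify the bookkeeping: there are $\binom{k}{\lfloor k/2\rfloor}$ palindromic Dyck $k$-paths (their first halves range over nonnegative length-$k$ lattice paths), the remaining $\catalan{k}-\binom{k}{\lfloor k/2\rfloor}$ pair off, and enough $\ast$-fixed permutations fixing $0$ yield \emph{valid} (non-negative) palindromic paths under the second bullet.

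The main obstacle is not the symmetry itself but \Cref{conj:weaker}, which is open; the symmetry is essentially ``free'' for a flow- or polytope-based proof but genuinely obstructs a naive recursion on $k$, since halving a wreath into two half-length wreaths does not interact transparently with the global $180^{\circ}$ symmetry of the permutation matrix. A reasonable intermediate target, and the one I would attempt first, is the conditional statement: \emph{if} a decomposition as in \Cref{conj:weaker} exists for a given $k$, then a $\ast$-symmetric one exists, i.e. the equivariant constraint is not a genuine strengthening. Even this seems to need more than averaging (wreaths do not form a group), and a successful argument would most likely proceed by an equivariant surgery on an arbitrary decomposition, modifying it along $\ast$-orbits of ``conflicts'' — which is where I expect the real work to lie.
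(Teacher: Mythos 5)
This statement is \Cref{conj:stronger}, which the paper leaves open: the authors offer no proof, only a computer verification for $k\leq 4$ and the consistency check provided by \Cref{le:necessary} and \Cref{cor:main}. You correctly recognise this and do not claim a proof, so there is nothing in the paper to measure your argument against. What you do establish unconditionally is sound and goes somewhat beyond what the paper records. Writing $\sigma^{\ast}(j)=-\sigma(-j)$, one checks exactly as you say that $\ast$ is an involution on permutations fixing $0$, that $\fly_{\sigma^{\ast}}=\{-S: S\in\fly_{\sigma}\}$ (reindex the window $\{i+1,\dots,i+k\}$ by $i'=-i-k-1$), and that the third bullet is precisely the statement $\varphi^{-1}(D^{(R)})=(\varphi^{-1}(D))^{\ast}$, i.e.\ that $\varphi$ intertwines $\ast$ with reflection. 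Your compatibility check for the second bullet is also right, since step $j$ of $D^{(R)}$ is the opposite of step $2k+1-j\equiv -j$ of $D$ and $x\in\{1,\dots,k\}$ iff $-x\in\{k+1,\dots,2k\}$. Since the $\catalan{k}$ wreaths in any such cover are automatically pairwise disjoint (by counting, $\binom{2k+1}{k}=(2k+1)\catalan{k}$), one can even push your reformulation further: every negation-symmetric $k$-subset must lie in a $\ast$-fixed wreath, and a count shows each $\ast$-fixed wreath contains exactly one such subset. One small imprecision: the fixed locus does not cover \emph{exactly} the negation-symmetric subsets --- a $\ast$-fixed wreath has $2k+1$ members of which only one is negation-symmetric --- though this does not affect your plan.

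The genuine gap is the one you name yourself: everything above is a reduction, not a proof, and the base case --- \Cref{conj:weaker}, or even the original \Cref{conj:original} for $n=2k+1$ --- is open. Moreover, your proposed route of running a Baranyai-style flow or polytope argument equivariantly presupposes that such an argument proves \Cref{conj:weaker} in the first place, which is not known; Baranyai's method produces the partition into perfect matchings for $k\mid n$ but is not known to produce wreath decompositions, let alone ones compatible with the Dyck-path labelling in the second bullet (which is the genuinely new combinatorial constraint here, tied to \Cref{cor:main}). Even your weaker intermediate target --- that an arbitrary decomposition as in \Cref{conj:weaker} can be surgered into a $\ast$-symmetric one --- is left entirely open, as you acknowledge. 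So the proposal should be read as a correct and useful reformulation of the third bullet plus a research programme, not as progress that could be spliced into a proof.
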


The bijections from \Cref{wreaths} all satisfy the stronger \Cref{conj:stronger}. 

\begin{figure}[h]\centering
    			\includegraphics[height=7 cm]{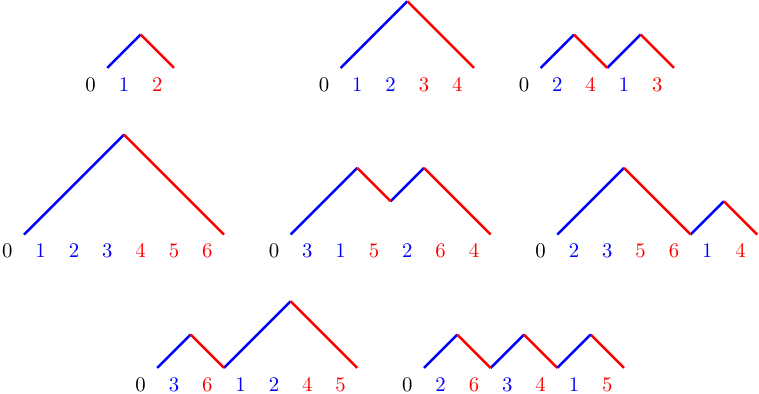}
    			\caption{Examples of permutations which confirm Conjectures \ref{conj:weaker} and \ref{conj:stronger} for $k \leq 3$. Below each Dyck $k$-path the corresponding permutation $\pi$ is written in the form $\pi(0), \pi(1), \dots, \pi(2k)$. The second conditions of the conjectures require numbers $1,2,\dots, k$ to lie below rises and numbers $k+1, k+2, \dots, 2k$ below falls.} 
\label{wreaths}
	\end{figure}

\section{Proofs of \Cref{thm:main} and \Cref{cor:main}}\label{sec:Proofs}

To simplify the calculations in the proofs later, we introduce some additional notation. We call a lattice path a \emph{NE upper path} if it never visits a point below the $y=x$ diagonal and each of its steps is either $(0,1)$ -- a \emph{North} step --, or $(1,0)$ -- an \emph{East} step. Instead of working with Dyck $k$-paths as defined in \Cref{sec:Dyck}, we will work with NE upper paths from $(0,0)$ to $(k,k)$.
To observe that there is a bijection between these two families of paths, consider a rotation of the plane by $\frac{\pi}{4}$ and an appropriate scaling.
Under this bijection, rises translate to North steps and falls translate to East steps. 

The proof uses the following well-known generalisation of Catalan numbers counting the number of NE upper paths between $(x_1,y_1),(x_2,y_2) \in \mathbb{Z}^2$. This number, which we denote by $\paths{(x_1,y_1)}{(x_2,y_2)}$, is nonzero if and only if $x_1 \leq x_2$, $y_1 \leq y_2$ and $x_i \leq y_i$ for $i \in \{1,2\}$.
Note that for $j \in \mathbb{Z}$ we have $\paths{(x_1,y_1)}{(x_2,y_2)}=\paths{(x_1+j,y_1+j)}{(x_2+j,y_2+j)}$.

\begin{lemma}\label{lem:main}
    Let $(x_1,y_1),(x_2,y_2) \in \mathbb{Z}^2$ be such that $x_1 \leq x_2$, $y_1 \leq y_2$ and $x_i \leq y_i$ for $i \in \{1,2\}$.
    Set $\delta=y_1-x_1$, $\alpha=y_2-y_1$ and $\beta=x_2-x_1$.
    Then $$\paths{(x_1,y_1)}{(x_2,y_2)}=\binom{\alpha+\beta}{\beta}-\binom{\alpha+\beta}{\beta-\delta-1}.$$
\end{lemma}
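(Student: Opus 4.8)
The plan is to prove \Cref{lem:main} by Andr\'e's reflection principle. Ignoring the diagonal constraint for a moment, a monotone lattice path from $(x_1,y_1)$ to $(x_2,y_2)$ consists of $\beta=x_2-x_1$ East steps and $\alpha=y_2-y_1$ North steps in some order, so there are exactly $\binom{\alpha+\beta}{\beta}$ of them; it therefore suffices to count the \emph{bad} ones, meaning those that visit some vertex strictly below the line $y=x$. Since every step changes the quantity $x-y$ by exactly $\pm 1$ and the path starts with $x-y=-\delta\le 0$, a monotone path is bad if and only if it visits a vertex on the line $\ell\colon x-y=1$.

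Next I would build a bijection between the bad paths from $(x_1,y_1)$ to $(x_2,y_2)$ and \emph{all} monotone lattice paths from $(y_1+1,x_1-1)$ to $(x_2,y_2)$. Reflection in $\ell$ is the involution $(a,b)\mapsto(b+1,a-1)$: it fixes $\ell$ pointwise, it interchanges North and East steps (so it sends monotone lattice paths to monotone lattice paths), and it carries $(x_1,y_1)$ to $(y_1+1,x_1-1)$. Given a bad path $P$, let $v$ be its first vertex on $\ell$ and replace the part of $P$ from $(x_1,y_1)$ to $v$ by its reflection; since $v$ is fixed, the result $P'$ is a monotone lattice path from $(y_1+1,x_1-1)$ to $(x_2,y_2)$. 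Conversely, every monotone lattice path $Q$ from $(y_1+1,x_1-1)$ to $(x_2,y_2)$ must touch $\ell$, because along $Q$ the value $x-y$ starts at $(y_1+1)-(x_1-1)=\delta+2\ge 2$, ends at $x_2-y_2\le 0$, and changes by $\pm 1$ at each step, so it equals $1$ somewhere; reflecting the part of $Q$ up to its first vertex on $\ell$ returns a bad path from $(x_1,y_1)$ to $(x_2,y_2)$. The two operations are mutually inverse because reflection is an involution and the location of ``the first vertex on $\ell$'' is unchanged.

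It remains to count the monotone lattice paths from $(y_1+1,x_1-1)$ to $(x_2,y_2)$: they still use $\alpha+\beta$ steps in total, of which $x_2-(y_1+1)=\beta-\delta-1$ are East steps, so there are $\binom{\alpha+\beta}{\beta-\delta-1}$ of them. Subtracting from $\binom{\alpha+\beta}{\beta}$ yields the claimed identity. When $\beta-\delta-1<0$ this binomial coefficient is $0$ by the convention fixed earlier in the paper, which is consistent with the geometry: in that case $x_2\le y_1$, so every monotone path from $(x_1,y_1)$ to $(x_2,y_2)$ automatically satisfies $x\le x_2\le y_1\le y$ at every vertex and none is bad. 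The step that needs real care is the reflection bijection --- in particular, checking that $(y_1+1,x_1-1)$ sits on the forbidden side of $\ell$ (so the inverse map is well defined via the discrete intermediate value argument above) and that this uniform treatment also absorbs the degenerate case where the constraint is vacuous. The remaining bookkeeping of step counts is routine.
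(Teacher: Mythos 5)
Your argument is correct and is essentially the same as the paper's: both prove the lemma by Andr\'e's reflection principle, identifying the bad paths as those touching the line $x-y=1$ and putting them in bijection with unconstrained monotone paths counted by $\binom{\alpha+\beta}{\beta-\delta-1}$. The only cosmetic difference is that you reflect the \emph{prefix} of a bad path up to its first vertex on that line (moving the start point to $(y_1+1,x_1-1)$), whereas the paper swaps North and East steps in the \emph{suffix} after the first violation (moving the endpoint); the two bijections are mirror images of the same idea.
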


We give a proof of this lemma to keep the paper self-contained. The proof is analogous to Andr{\'e}'s reflection principle \cite{AndreReflection87}.

\begin{proof}
    There are $\binom{\alpha+\beta}{\beta}$ paths from $(x_1,y_1)$ to $(x_2,y_2)=(x_1+\beta,y_1+\alpha)$ consisting of North and East steps.
    We claim that the number of paths from $(x_1,y_1)$ to $(x_2,y_2)$ consisting of North and East steps which visit a point below the diagonal $y=x$ is $\binom{\alpha+\beta}{\beta-\delta-1}$, which gives the result.

    To show this claim, we find a bijection between the paths from $(x_1,y_1)$ to $(x_2,y_2)$ consisting of North and East steps which visit a point below the diagonal $y=x$ and the paths from $(x_1,y_1)$ to $(x_1+\alpha+\delta+1,y_1+\beta-\delta-1)$ consisting of North and East steps.

    Given a path $W$ of the first type, consider the step after which the path visits a point below the diagonal $y=x$ for the first time. Let this be the $s$-th step. From the $(s+1)$-st step onward, we exchange North and East steps. The resulting path consists of $\beta-\delta-1$ North and $\alpha+\delta+1$ East steps, i.e., it is a path of the second type.

    Now consider a path $W'$ of the second type. Recall that $x_1 \leq y_1$, in other words, $(x_1,y_1)$ is not below the diagonal $y=x$. We also have $y_1+\beta-\delta-1=x_2-1<y_2+1=y_1+\alpha+1=x_1+\alpha+\delta+1$, therefore $(x_1+\alpha+\delta+1,y_1+\beta-\delta-1)$ is below the diagonal $y=x$. Therefore, there exists a step of $W'$ after which $W'$ visits a point below the diagonal $y=x$ for the first time. Analogously to before, exchange all North and East steps after this step. The resulting path consists of $\alpha$ North and $\beta$ East steps and visits a point below the diagonal $y=x$, therefore is of the first type.

    The maps from the previous two paragraphs are inverses to each other, and so describe the desired bijection. See \Cref{fig:lemma} for an illustration.

    \begin{figure}[hbtp]\centering
    			\includegraphics[height=4 cm]{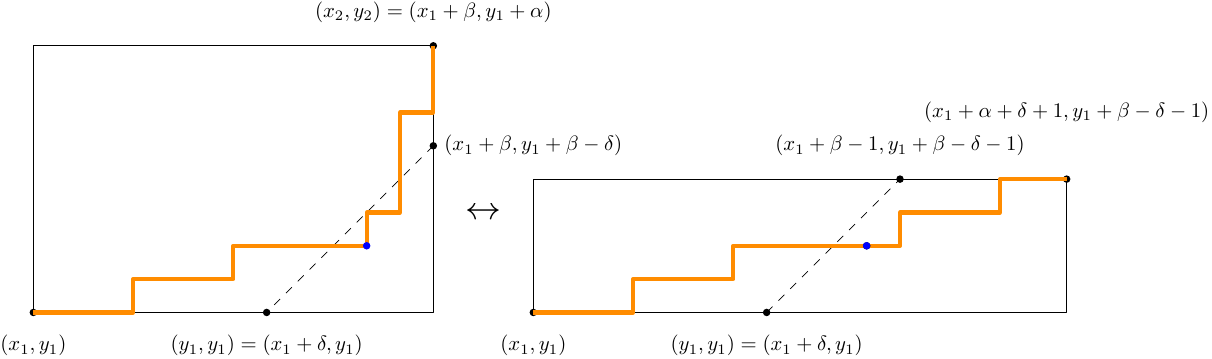}
    			\caption{The bijection from \Cref{lem:main} given by `flipping' the section of the path after the highlighted point.}
       \label{fig:lemma}
	\end{figure}

     Hence we have $\paths{(x_1,y_1)}{(x_2,y_2)}=\binom{\alpha+\beta}{\beta}-\binom{\alpha+\beta}{\beta-\delta-1}$, as claimed.
\end{proof}

Equipped with this lemma, we move on to the proof of the main theorem.

\begin{proof}[Proof of \Cref{thm:main}]
We consider all possible `starting points' $(i,i+d)$ of intervals of length $m$ which contain exactly $l$ East steps. To obtain $\stat{k}{m}{l}$, we sum over all such starting points the number of NE upper paths which visit the starting point and have exactly $l$ East steps among the next $m$ steps following this visit. Note that after these $m$ steps, any such NE upper path visits $(i+l,i+d+m-l)$. See \Cref{fig:3parts} for an illustration.

\pagebreak

\begin{figure}[hbtp]\centering
    			\includegraphics[height=9 cm]{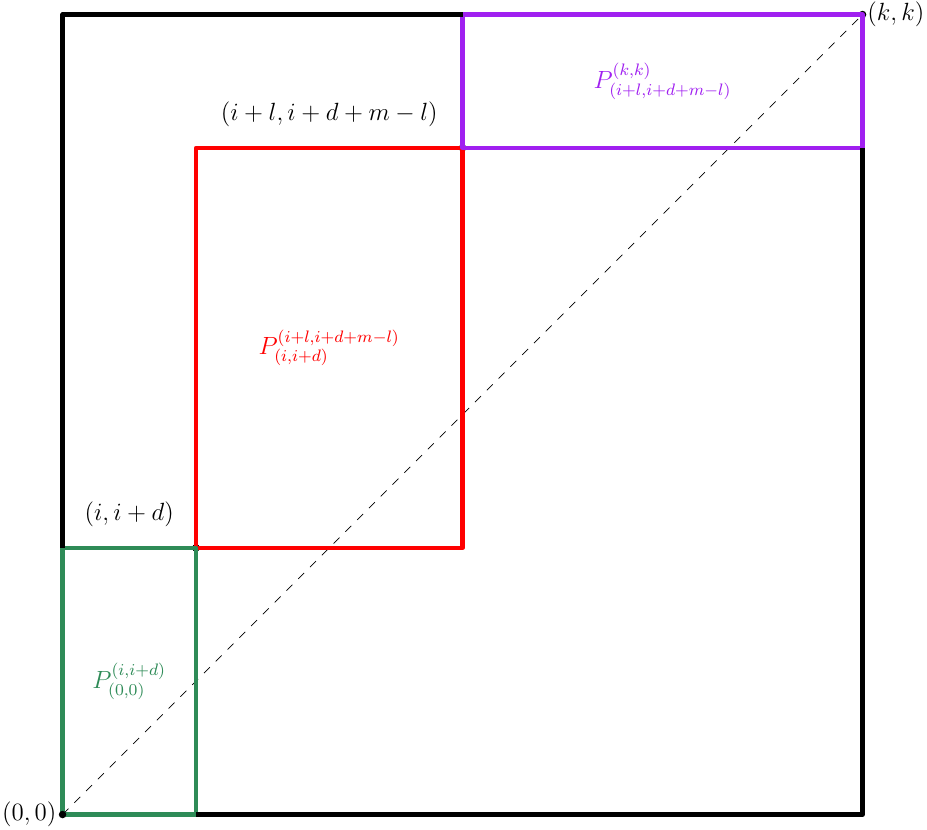}
    			\caption{An illustration of NE upper paths whose interval of length $m$ starting at $(i,i+d)$ contains exactly $l$ East steps.}
       \label{fig:3parts}
	\end{figure}

As both the starting point and the corresponding endpoint lie in the square $[0,k]^2$, we have $i,d\geq 0$ as well as $i+l\leq k$ and $i+d+m-l\leq k$. Therefore $i$ and $d$ satisfy $0 \leq d \leq k+l-m$ and $0\leq i \leq k+l-m-d$.

In the identities below, first we use $\paths{(x_1,y_1)}{(x_2,y_2)}=\paths{(x_1+j,y_1+j)}{(x_2+j,y_2+j)}$ and then simplify the sums:

\begin{align*}
    \stat{k}{m}{l}
    &=\sum_{d=0}^{k+l-m}\sum_{i=0}^{k+l-m-d}\paths{(0,0)}{(i,i+d)}\paths{(i,i+d)}{(i+l,i+d+m-l)}\paths{(i+l,i+d+m-l)}{(k,k)}\\
    &=\sum_{d=0}^{k+l-m}\sum_{i=0}^{k+l-m-d}\paths{(0,0)}{(i,i+d)}\paths{(0,d)}{(l,d+m-l)}\paths{(i+l,i+d+m-l)}{(k,k)}\\
    &=\sum_{d=0}^{k+l-m}\paths{(0,d)}{(l,d+m-l)}
    \sum_{i=0}^{k+l-m-d}\paths{(0,0)}{(i,i+d)}\paths{(i+l,i+d+m-l)}{(k,k)}.
\end{align*}

Further simplification of this sum comes from the following claim.

\begin{Claim}\label{Claim}
For any $d$ such that $0 \leq d \leq k+l-m$ we have
\begin{equation}
    \sum_{i=0}^{k+l-m-d}\paths{(0,0)}{(i,i+d)}\paths{(i+l,i+d+m-l)}{(k,k)}=\binom{2k-m+1}{k-m+l-d}-\binom{2k-m+1}{k-m+l-d-1}.\label{eq:claim}    
\end{equation}
\end{Claim}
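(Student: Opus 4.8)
We need to evaluate $\sum_{i=0}^{k+l-m-d}\paths{(0,0)}{(i,i+d)}\paths{(i+l,i+d+m-l)}{(k,k)}$. The idea is to interpret this sum as counting a single family of NE upper paths, then evaluate that count using \Cref{lem:main}. Each term is a product of two path counts: paths from $(0,0)$ to $(i,i+d)$, and paths from $(i+l,i+d+m-l)$ to $(k,k)$. If we could "glue" these by a fixed connector, the sum over $i$ would telescope into counting paths of a single type.

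Let me work out the parameters. Write $j=k+l-m-d\geq 0$ for the upper limit. Using \Cref{lem:main}: $\paths{(0,0)}{(i,i+d)}=\binom{2i+d}{i}-\binom{2i+d}{i-1}$ and, shifting by $-k$, $\paths{(i+l,i+d+m-l)}{(k,k)}=\paths{(i+l-k,\,i+d+m-l-k)}{(0,0)}$; here the relevant parameters are $\delta'=(i+d+m-l-k)-(i+l-k)=d+m-2l$, $\alpha'=k-(i+d+m-l-k)=2k-i-d-m+l$, $\beta'=k-(i+l-k)=2k-i-l$, giving $\binom{4k-2i-d-m}{2k-i-l}-\binom{4k-2i-d-m}{2k-i-l-d-m+2l-1}$. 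Wait — this is getting heavy; the cleaner route is combinatorial.

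**The combinatorial approach.** The plan is: consider NE upper paths from $(0,0)$ to $(k-l,\,k-d)$ — or an equivalent endpoint — such that we split them at the point where the $x$-coordinate equals $i$ and $y$-coordinate equals $i+d$, i.e. at the lattice point on the line $y=x+d$ with abscissa $i$. But a path need not visit such a point for every $i$; rather, each NE upper path from $(0,0)$ to a suitable endpoint crosses the line $y=x+d$ in a well-defined way. Concretely, I would define a bijection sending a pair (path to $(i,i+d)$, path from $(i+l,i+d+m-l)$ to $(k,k)$) to a single NE upper path from $(0,0)$ to $(k-l,\,k-m+l)$ by deleting the "middle block" of $l$ East and $m-l$ North steps: starting from $(i,i+d)$, instead of taking the prescribed middle segment, jump directly and continue. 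The resulting path goes from $(0,0)$ to $(k-l,k+d-d) = (k-l,\,k+2l-m-d)$... let me recompute: endpoint $(k,k)$ minus middle displacement $(l,\,m-l)$ gives $(k-l,\,k-m+l)$. For this to be a valid NE upper path endpoint we need $k-l\leq k-m+l$, i.e. $2l\leq m$, which holds.

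**Why the index $i$ is summed out.** The point is that a NE upper path $Q$ from $(0,0)$ to $(k-l,k-m+l)$ determines $i$ uniquely as the $x$-coordinate at which we "reinsert" the middle block: we must reinsert it at a point $(i,i+d)$ lying on the diagonal $y=x+d$, and there is exactly one such point on $Q$ only if $d$ is pinned down — but $d$ is fixed, and a monotone lattice path from $(0,0)$ to $(k-l,k-m+l)$ crosses the diagonal $y=x+d$ in a segment, not necessarily a single point. So the correct statement is: the pair is reconstructed by choosing where along $Q$'s intersection with $\{y=x+d\}$ to split, but to make it a clean bijection I would instead split $Q$ at the \emph{last} lattice point with $y-x\geq d$ before... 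Hmm. The honest fix: a pair as above corresponds to a path $Q$ from $(0,0)$ to $(k-l,k-m+l)$ together with a marked East-or-North step, but more robustly, observe $\sum_i \paths{(0,0)}{(i,i+d)}\paths{(i+l,i+d+m-l)}{(k,k)}$ equals the number of NE upper paths from $(0,0)$ to $(k-l,k-m+l)$ that stay weakly below... no, that over- or under-counts.

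**Main obstacle and resolution.** The main obstacle is precisely this gluing subtlety: the sum is a \emph{Vandermonde-type convolution} of ballot numbers, and the clean way to handle it is via the \emph{cycle lemma / Lindström–Gessel–Viennot} flavour, or simply by brute algebra using \Cref{lem:main} and the Vandermonde identity $\sum_i \binom{a}{i}\binom{b}{n-i}=\binom{a+b}{n}$ together with its variants for differences of binomials. So my actual plan: expand both path-counts via \Cref{lem:main} into differences of binomial coefficients in $i$; the summand becomes a signed sum of four products $\binom{\cdot}{i+\epsilon_1}\binom{\cdot}{i+\epsilon_2}$; extend the summation range to all $i\in\mathbb{Z}$ (the added terms vanish because the ballot numbers $\paths{}{}$ are zero outside the valid range — this needs a short check that $\paths{(0,0)}{(i,i+d)}=0$ for $i<0$ and that $\paths{(i+l,i+d+m-l)}{(k,k)}=0$ once $i>k+l-m-d$, both immediate from the nonvanishing criterion stated before \Cref{lem:main}); apply Vandermonde to each of the four convolutions to collapse the $i$-sum; and finally simplify the resulting four binomials in the top parameter $2k-m+1$ down to the two-term expression $\binom{2k-m+1}{k-m+l-d}-\binom{2k-m+1}{k-m+l-d-1}$, where a cancellation of the "middle" two terms against each other is expected. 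The hard part is bookkeeping the shifts so the cancellation is visible; I expect it is a short computation once the range-extension is justified.

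\medskip

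\noindent\emph{Plan summary.} (i) Rewrite both $\mathrm{P}$-factors via \Cref{lem:main} as differences of binomials in the variable $i$. (ii) Observe the summand vanishes outside $0\le i\le k+l-m-d$, so extend the sum to $i\in\mathbb Z$. (iii) Apply the Vandermonde convolution $\sum_{i}\binom{a}{c-i}\binom{b}{i}=\binom{a+b}{c}$ to each of the four resulting products. (iv) Check that two of the four binomials obtained cancel, leaving $\binom{2k-m+1}{k-m+l-d}-\binom{2k-m+1}{k-m+l-d-1}$. The chief obstacle is tracking the index shifts through steps (i) and (iii); the rest is routine.
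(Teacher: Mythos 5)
The proposal has a genuine gap at step (iii). When you expand the two factors via \Cref{lem:main}, you get
$\paths{(0,0)}{(i,i+d)}=\binom{2i+d}{i}-\binom{2i+d}{i-1}$ and
$\paths{(i+l,i+d+m-l)}{(k,k)}=\binom{2k-2i-d-m}{k-i-l}-\binom{2k-2i-d-m}{k-i+l-m-d-1}$.
The upper indices $2i+d$ and $2k-2i-d-m$ both depend on the summation variable $i$ (only their sum $2k-m$ is fixed), so none of the four products is of the form $\binom{a}{c-i}\binom{b}{i}$ with $a,b$ constant, and the Vandermonde convolution simply does not apply. What you actually need is the convolution identity for ballot numbers, i.e.\ the statement that $\sum_{i+j=N}B_r(i)B_s(j)=B_{r+s+1}(N)$ where $B_r(n)=\binom{2n+r}{n}-\binom{2n+r}{n-1}$, which corresponds to $C(x)^{r+1}C(x)^{s+1}=C(x)^{r+s+2}$ for the Catalan generating function (a Rothe--Hagen-type identity, not Vandermonde). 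That identity is essentially the content of the Claim itself, so invoking it without proof would be circular, and proving it requires a genuinely different tool from the one you name. Your instinct that the summand vanishes outside the stated range is correct, but that alone does not rescue the computation.

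Ironically, your abandoned first idea is much closer to a working proof, and the difficulty you correctly identified (the split point $i$ is not recoverable after gluing) has a clean fix. Instead of deleting the middle block, translate the two segments so that they meet on the diagonal: the product $\paths{(0,0)}{(i,i+d)}\,\paths{(i+l,i+d+m-l)}{(k,k)}$ equals $\paths{(l-k,\,d+m-l-k)}{(-i,-i)}\,\paths{(-i,-i)}{(0,d)}$, so the sum counts pairs (path from $(l-k,d+m-l-k)$ to $(0,d)$, marked visit to the diagonal $y=x$). Inserting one extra East step arriving at the marked point turns such a pair into a path from $(l-k-1,d+m-l-k)$ to $(0,d)$ whose \emph{first} diagonal touch records $i$, making the correspondence bijective; the count of those paths is then $\paths{(l-k-1,d+m-l-k)}{(0,d)}-\paths{(l-k,d+m-l-k)}{(1,d)}$, and \Cref{lem:main} finishes the job. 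Without either this marker trick or the ballot-convolution identity, the proposal as written does not go through.
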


\begin{proof}[Proof of \Cref{Claim}]

    Consider the pairs $(W,i)$, where $i \in \{ 0,1, \dots, k+l-m-d \}$ and $W$ is a NE upper path from $(l-k,d+m-l-k)$ to $(0,d)$ which goes through the point $(-i,-i)$. Using $\paths{(x_1,y_1)}{(x_2,y_2)}=\paths{(x_1+j,y_1+j)}{(x_2+j,y_2+j)}$, observe that the number of distict pairs $(W,i)$ is precisely the left-hand side of equation \eqref{eq:claim}. See \Cref{fig:2parts} for an illustration.

\begin{figure}[hbtp]\centering
    			\includegraphics[height=6 cm]{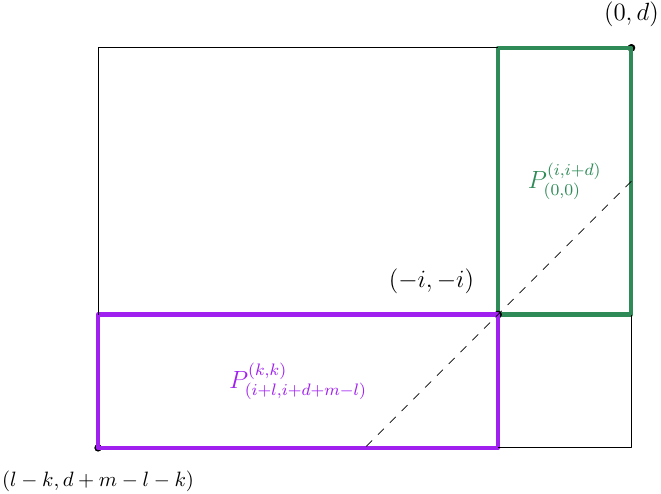}
    			\caption{An illustration of the left-hand side of equation \eqref{eq:claim}.}
       \label{fig:2parts}
	\end{figure}

 Next, we claim that the pairs $(W,i)$ are in bijection with NE upper paths from $(l-k-1,d+m-l-k)$ to $(0,d)$ which visit the diagonal $y=x$ in at least one point.
 The bijection is given as follows.
 
 Given a pair $(W,i)$, consider the sequence of $k+l-m$ North and $k-l$ East steps corresponding to $W$. After the step at which $W$ reaches $(-i,-i)$, add an additional East step. Denote by $W'$ the path corresponding to the longer sequence of North and East steps starting at $(l-k-1,d+m-l-k)$.
 Observe that $W'$ consists of $k+l-m$ North steps and $k-l+1$ East steps, never visits a point below the diagonal $y=x$ but visits a point on this diagonal.
 Moreover, the first visited point on this diagonal is $(-i,-i)$.

 On the other hand, let $W'$ be a path from $(l-k-1,d+m-l-k)$ to $(0,d)$ consisting of North and East steps that never goes below the diagonal $y=x$ but which visits this diagonal in at least one point.
 Then there is a first point at which $W'$ visits this diagonal; let this point be $(-i,-i)$.
 Consider the sequence of $k+l-m$ North and $k-l+1$ East steps corresponding to $W'$.
 Remove the East step that leads to $(-i,-i)$ to obtain a shorter sequence of steps, and denote by $W$ the corresponding path starting at $(l-k,d+m-l-k)$. Then $(W,i)$ is a pair where $i \in \{ 0,1, \dots, k+l-m-d \}$ and $W$ is a NE upper path from $(l-k,d+m-l-k)$ to $(0,d)$ which goes through the point $(-i,-i)$.

The maps from the previous two paragraphs are inverses to each other, and hence describe the desired bijection, see \Cref{fig:bijection} for an illustration.

\begin{figure}[hbtp]\centering
    			\includegraphics[height=5.5 cm]{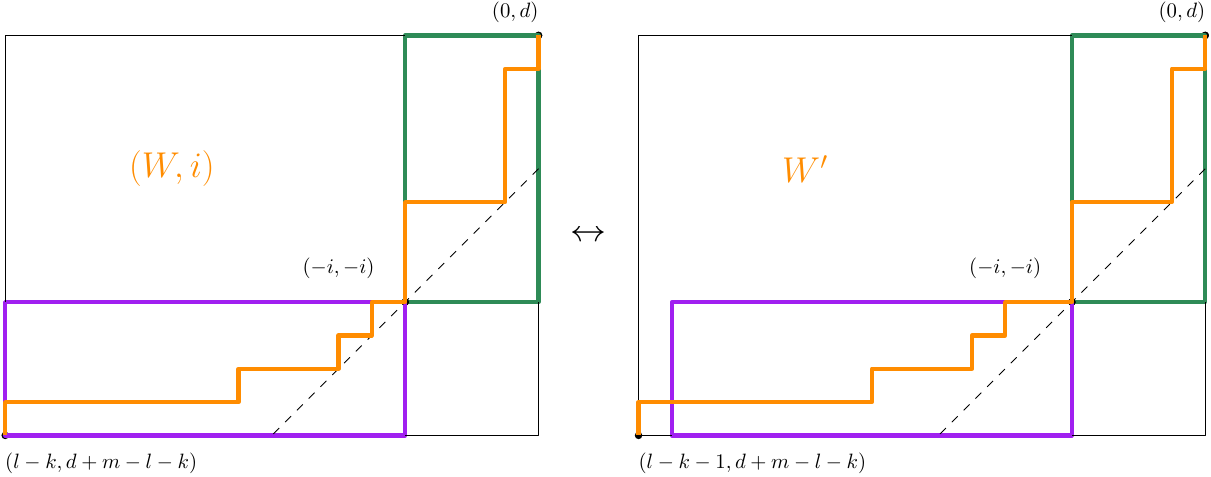}
    			\caption{The bijection from \Cref{Claim}. The additional step in $W'$ is the East step leading to $(-i,i)$.} 
\label{fig:bijection}
	\end{figure}

We thus obtain that the left-hand side of equation \eqref{eq:claim} counts the number of NE upper paths from $(l-k-1,d+m-l-k)$ to $(0,d)$ which visit the diagonal $y=x$ in at least one point. Note that such paths are exactly the NE upper paths from $(l-k-1,d+m-l-k)$ to $(0,d)$ which visit a point below the diagonal $y=x+1$. The number of such paths is $\paths{(l-k-1,d+m-l-k)}{(0,d)}-\paths{(l-k,d+m-l-k)}{(1,d)}$.

Applying \Cref{lem:main} and observing that $\paths{(l-k,m-l-k)}{(1,0)}=0=\binom{2k-m+1}{k-l+1}-\binom{2k-m+1}{(k-l+1)-(m-2l)-1}$ we get

\begin{align*}
    \sum_{i=0}^{k+l-m-d}\paths{(0,0)}{(i,i+d)}\paths{(i+l,i+d+m-l)}{(k,k)}&=
    \paths{(l-k-1,d+m-l-k)}{(0,d)}-\paths{(l-k,d+m-l-k)}{(1,d)}\\
    &=\bigg[\binom{2k-m+1}{k-l+1}-\binom{2k-m+1}{(k-l+1)-(d+m-2l+1)-1}\bigg]\\
    &\qquad-\bigg[\binom{2k-m+1}{k-l+1}-\binom{2k-m+1}{(k-l+1)-(d+m-2l)-1}\bigg]\\
    &=\binom{2k-m+1}{k-m+l-d}-\binom{2k-m+1}{k-m+l-d-1}.
\end{align*}

This concludes the proof of \Cref{Claim}.
\end{proof}

By applying \Cref{lem:main} and \Cref{Claim}, we obtain
\begin{align*}
    \stat{k}{m}{l}&=\sum_{d=0}^{k+l-m}\paths{(0,d)}{(l,d+m-l)}
    \sum_{i=0}^{k+l-m-d}\paths{(0,0)}{(i,i+d)}\paths{(i+l,i+d+m-l)}{(k,k)}\\
    &=\sum_{d=0}^{k+l-m}\bigg[\binom{m}{l}-\binom{m}{l-d-1}\bigg]\bigg[\binom{2k-m+1}{k-m+l-d}-\binom{2k-m+1}{k-m+l-d-1}\bigg],
\end{align*}
completing our proof of \Cref{thm:main}.
\end{proof}

The formula from \Cref{thm:main} simplifies considerably when $m=k$.

\begin{proof}[Proof of \Cref{cor:main}]
   Plugging $m=k$ into \Cref{thm:main} we obtain
   \begin{align*}
   \stat{k}{k}{l}&=\sum_{d=0}^{l}\bigg[\binom{k}{l}-\binom{k}{l-d-1}\bigg]\bigg[\binom{k+1}{l-d}-\binom{k+1}{l-d-1}\bigg]\\
   &=\binom{k}{l}\sum_{d=0}^{l}\bigg[\binom{k+1}{l-d}-\binom{k+1}{l-d-1}\bigg]-\sum_{d=0}^{l-1}\binom{k}{l-d-1}\bigg[\binom{k+1}{l-d}-\binom{k+1}{l-d-1}\bigg].    
   \end{align*}

Noting that the second factor in the first term is a telescoping sum, we find that
\begin{equation}\label{eq2}
   \binom{k}{l}\sum_{d=0}^{l}\bigg[\binom{k+1}{l-d}-\binom{k+1}{l-d-1}\bigg]=\binom{k}{l}\binom{k+1}{l}. 
\end{equation}

To simplify the second term, first rewrite it as
\begin{align*}
&\sum_{d=0}^{l-1}\binom{k}{l-d-1}\bigg[\binom{k+1}{l-d}-\binom{k+1}{l-d-1}\bigg]=\\
&\qquad\sum_{d'=0}^{l-1}\binom{k}{d'}\binom{k+1}{k-d'}-\sum_{d'=0}^{l-1}\binom{k+1}{d'}\binom{k}{k-d'}.
\end{align*}

Consider now the tasks of choosing $k$ objects out of $2k+1$ ordered objects. The first sum counts the number of ways to do so in a way that at most $l-1$ of the first $k$ objects are picked. The second sum counts the number of ways to do so in a way that at most $l-1$ of the first $k+1$ objects are picked. Any choice of the second type is a choice of the first type. The only choices of the first type that are not of the second type are those which choose exactly $l-1$ elements from the first $k$ and also choose the $(k+1)$-st element (which means $k-l$ elements are chosen from the last $k$ elements).

    We thus obtain

\begin{equation}\label{eq3}
   \sum_{d=0}^{l-1}\binom{k}{l-d-1}\bigg[\binom{k+1}{l-d}-\binom{k+1}{l-d-1}\bigg]=\binom{k}{l-1}\binom{k}{k-l}=\binom{k}{l}\binom{k}{l-1}. 
\end{equation}

   Equations \eqref{eq2} and \eqref{eq3} imply
   $$\stat{k}{k}{l}=\binom{k}{l}\binom{k+1}{l}-\binom{k}{l}\binom{k}{l-1}=\binom{k}{l}^2,$$
   as claimed.
\end{proof}

\section{Concluding remarks}
The main open questions of interest are the conjectures from \Cref{sec:Wreath}. As we have seen, \Cref{cor:main} provides support for the newly introduced \Cref{conj:weaker} and \Cref{conj:stronger}.
The authors are also curious if the right-hand side of \Cref{thm:main} could be simplified further in other cases than those with $m = k, k-1, k+1$ already considered.

\section*{Acknowledgements}
The authors thank Miroslav Olšák for observing \Cref{le:necessary} which motivated the main results of this paper. They also thank Béla Bollobás and Mark Wildon for helpful comments concerning the project and the manuscript. 

The first author would like to acknowledge support by the EPSRC (Engineering and Physical Sciences Research Council), reference EP/V52024X/1, and by the Department of Pure Mathematics and Mathematical Statistics of the University of Cambridge. The second author would like to acknowledge support from Royal Holloway, University of London.

\bibliographystyle{abbrvnat}  
\bibliography{bibliography}

\end{document}